\begin{document}
\newcommand{\note}[1]{\marginpar{\tiny #1}}
\newtheorem{theorem}{Theorem}[section]
\newtheorem{result}[theorem]{Result}
\newtheorem{fact}[theorem]{Fact}
\newtheorem{example}[theorem]{Example}
\newtheorem{conjecture}[theorem]{Conjecture}
\newtheorem{definition}[theorem]{Definition}
\newtheorem{lemma}[theorem]{Lemma}
\newtheorem{proposition}[theorem]{Proposition}
\newtheorem{remark}[theorem]{Remark}
\newtheorem{corollary}[theorem]{Corollary}
\newtheorem{facts}[theorem]{Facts}
\newtheorem{question}[theorem]{Question}
\newtheorem{props}[theorem]{Properties}

\newtheorem{ex}[theorem]{Example}

\newcommand{\notes} {\noindent \textbf{Notes.  }}
\renewcommand{\note} {\noindent \textbf{Note.  }}
\newcommand{\defn} {\noindent \textbf{Definition.  }}
\newcommand{\defns} {\noindent \textbf{Definitions.  }}
\newcommand{\x}{{\bf x}}
\newcommand{\z}{{\bf z}}
\newcommand{\B}{{\bf b}}
\newcommand{\V}{{\bf v}}
\newcommand{\T}{\mathcal{T}}
\newcommand{\Z}{\mathbb{Z}}
\newcommand{\Hp}{\mathbb{H}}
\newcommand{\D}{\mathbb{D}}
\newcommand{\R}{\mathbb{R}}
\newcommand{\N}{\mathbb{N}}
\renewcommand{\B}{\mathbb{B}}
\newcommand{\C}{\mathbb{C}}
\newcommand{\dt}{{\mathrm{det }\;}}
 \newcommand{\adj}{{\mathrm{adj}\;}}
 \newcommand{\0}{{\bf O}}
 \newcommand{\w}{\omega}
 \newcommand{\av}{\arrowvert}
 \newcommand{\zbar}{\overline{z}}
 \newcommand{\htt}{\widetilde{h}}
\newcommand{\ty}{\mathcal{T}}
\renewcommand\Re{\operatorname{Re}}
\renewcommand\Im{\operatorname{Im}}
\newcommand{\diam}{\operatorname{diam}}
\newcommand{\dist}{\text{dist}}
\newcommand{\ds}{\displaystyle}
\numberwithin{equation}{section}
\newcommand{\cN}{\mathcal{N}}
\renewcommand{\theenumi}{(\roman{enumi})}
\renewcommand{\labelenumi}{\theenumi}
\newcommand{\inte}{\operatorname{int}}
\newcommand{\fix}{\operatorname{Fix}}

\date{\today}
\title{Attractor sets and Julia sets in low dimensions}
\author{A. Fletcher}
\email{fletcher@math.niu.edu}
\address{Department of Mathematical Sciences, Northern Illinois University, Dekalb, IL 60115, USA}

\thanks{This work was supported by a grant from the Simons Foundation (\#352034, Alastair Fletcher).}

\begin{abstract}
If $X$ is the attractor set of a conformal IFS in dimension two or three, we prove that there exists a quasiregular semigroup $G$ with Julia set equal to $X$. We also show that in dimension two, with a further assumption similar to the open set condition, the same result can be achieved with a semigroup generated by one element. Consequently, in this case the attractor set is quasiconformally equivalent to the Julia set of a rational map.
\end{abstract}

\maketitle

\section{Introduction}

\subsection{Background}

Fractals have received much attention over the last few decades as objects where there is interesting structure on all small scales. This stands in contrast to, for example, smooth curves which look more and more like line segments on smaller and smaller scales.
There are various ways to generate fractals. Two of the most common are as attractor sets of iterated function systems (IFSs) and Julia sets of holomorphic or meromorphic functions. Objects such as the Koch snowflake and Sierpinski gasket arise as attractor sets, see \cite{Falconer}, whereas fractals such as the Douady rabbit arise as Julia sets.
In a sense, these two notions are inverse to one another. The attractor set, as the name suggests, arises from the fact that the maps generating the IFS are contractions, whereas the repelling periodic points are dense in a Julia set.

The aim of this note is to show that every attractor set of a conformal IFS is also a Julia set in dimensions two and three. The caveat here is that we will consider Julia sets of quasiregular semigroups. These are a natural generalization of the more well-known theory of rational semigroups. Rational semigroups were first studied by Hinkkanen and Martin \cite{HM}. Quasiregular semigroups have the advantage of being defined in any (real) dimension. 
The Julia set of a quasiregular semigroup is the set of non-normality of the family generated by elements of the semigroup. The point here is that there is a uniform bound on the distortion of the maps in the semigroup, and so the quasiregular version of Montel's Theorem may be applied. Quasiregular semigroups were first studied by Iwaniec and Martin \cite{IM}, with more recent developments paralleling the development of the theory of rational semigroups in \cite{F}.

By realizing the attractor set of an IFS as a Julia set, we can bring the viewpoint of quasiregular semigroups to bear on these attractor sets. We view quasiregular semigroup theory as a generalization of complex dynamics to a family of mapping with distortion, and so we can expect some of the classical complex dynamical results to apply, for example, we immediately recover the fact that the attractor set of an IFS must be closed. For a more refined geometric result, it is well-known that the Julia set of a rational map is uniformly perfect and moreover, from \cite{F} we know that the Julia set of a quasiregular semigroup is also uniformly perfect. Consequently, we obtain the result that the attractor set of an IFS, of the type considered in this paper, is uniformly perfect. This result is contained in the paper of Stankewitz \cite{Stankewitz} in dimension two and is a minor strengthening of results from Xie et al \cite{XYS} in dimension three. This is because the results in \cite{XYS} are stated for IFSs generated by affine maps, and the conformal maps here, while they must be M\"obius, need not be affine. 

It is known that the attractor set of an IFS can have non-empty interior, see for example \cite{HS}. Since the Julia set of one uniformly quasiregular mapping cannot have interior without being all of $\overline{\R^n}$, we do need to consider semigroups generated by more than one element in order to capture all possible attractor sets.

There are several results in the literature along the theme of approximating a given set arbitrarily closely by Julia sets. First, Lindsey \cite{Lindsey} proved that any Jordan curve in $\C$ can be approximated arbitrarily well by the Julia set of a polynomial. This was refined by Lindsey and Younsi in \cite{LY}. Bishop and Pilgrim \cite{BP} showed that any continuum in the plane can be approximated arbitrarily well by dendrite Julia sets. Here, we are specializing to consider attractor sets of IFSs but we are able to obtain equality instead of approximation and, moreover, our arguments work in dimension three too. 

\subsection{Statement of results}
Our main result reads as follows. We denote the open unit ball in $\R^n$ by $\B^n$.

\begin{theorem}
\label{thm:1}
Let $j\in \{2,3\}$.
Let $X$ be a conformal IFS generated by contractions $\varphi_1,\ldots, \varphi_m$ acting on $\overline{\B^j}$ and so that at least two of the maps generating $X$ have distinct fixed points.
Suppose further that $\varphi_i(\B^j)$ is a quasiball that is relatively compact in $\B^j$ for $i=1,\ldots,m$. If $S$ is the attractor set of $X$,
then there exists a quasiregular semigroup $G$ acting on $\R^j$, with $J(G) = S$. Moreover, each element of $G$ can be chosen to have degree two. 
\end{theorem}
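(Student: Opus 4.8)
The plan is to realise each inverse branch $\varphi_i^{-1}$ as an \emph{expanding} branch of a quasiregular map and to assemble these into a uniformly quasiregular semigroup whose set of non-normality is exactly $S$. Write $n\in\{2,3\}$. Since the IFS is conformal, each $\varphi_i$ is conformal on a neighbourhood of $\overline{\B^n}$ (holomorphic in dimension two, and the restriction of a M\"obius map in dimension three by Liouville's theorem), so $\varphi_i$ and $\varphi_i^{-1}$ introduce no distortion; this conformality is precisely what will make the eventual uniform distortion bound possible.

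Modelling on a quadratic polynomial with totally disconnected Julia set, I first construct the generators. For a pair of indices $(i,j)$ whose images $U_i=\varphi_i(\B^n)$ and $U_j=\varphi_j(\B^n)$ are disjoint, I build a degree-two quasiregular map $g$ on $\widehat{\R}^n=\R^n\cup\{\infty\}$ that is polynomial-like: it fixes $\infty$ with the whole exterior of $\overline{\B^n}$ escaping to $\infty$, while the two components of $g^{-1}(\B^n)$ are the quasiballs $U_i$ and $U_j$, on which $g$ equals $\varphi_i^{-1}$ and $\varphi_j^{-1}$ respectively and maps conformally onto $\B^n$. The only non-conformal part of $g$ is then confined to the buffer $\overline{\B^n}\setminus(\overline{U_i}\cup\overline{U_j})$, where a quasiconformal interpolation---available because each $U_i$ is a quasiball relatively compact in $\B^n$---joins the two expanding branches to the escaping exterior dynamics. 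Taking enough such generators that every symbol $1,\dots,m$ occurs in some pair, the semigroup $G$ they generate realises, by composition, every word $\varphi_w=\varphi_{w_1}\circ\cdots\circ\varphi_{w_k}$ as an inverse branch of an element of $G$; this is the property that lets the backward dynamics of $G$ recover the whole self-similar tree that defines $S$. When $m=2$ a single degree-two generator already carries both branches, and overlaps among the $U_i$ are handled by first separating the paired images using their relative compactness.

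The heart of the matter, and the step I expect to be the main obstacle, is to show that $G$ is an honest quasiregular semigroup: that a single constant $K$ bounds the distortion of \emph{every} element of $G$, not just of the generators. In dimension two one could integrate an invariant Beltrami coefficient via the measurable Riemann mapping theorem, but in dimension three no such device exists, so I argue directly with the buffer. Arrange all generators to share the attracting fixed point at $\infty$ and a common forward-invariant region of escape into which every buffer drains. By the chain rule the dilatation of a composition $g_{i_1}\circ\cdots\circ g_{i_k}$ at a point $x$ is bounded by the product of the dilatations that the successive generators contribute along the forward orbit of $x$, and a generator is non-conformal only on its buffer. Since the expanding branches are conformal and, once an orbit meets a buffer, it is swept into the basin of escape and never returns, every orbit visits a buffer at most a bounded number of times $C$; hence the product is at most $K_0^{C}$, where $K_0$ is the largest distortion of a single generator---a bound independent of $k$. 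Thus every element of $G$ is $K$-quasiregular with $K=K_0^{C}$, and the quasiregular analogue of Montel's theorem becomes available for $G$. Making the interpolation explicit and verifying this uniform bound on buffer visits in every configuration permitted by the hypotheses is the delicate work.

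Finally I identify $J(G)$ with $S$. For $S\subseteq J(G)$: the fixed point of each composition $\varphi_w$ is a repelling fixed point of the element $g_w\in G$ having $\varphi_w$ as an inverse branch, and in the theory of quasiregular semigroups (\cite{F}) repelling periodic points lie in the closed set $J(G)$; since these points are dense in $S$ by the standard description
\[
S=\bigcap_{k\ge 1}\ \bigcup_{|w|=k}\varphi_w\bigl(\overline{\B^n}\bigr),
\]
it follows that $S\subseteq J(G)$. The assumption that two of the generating maps have distinct fixed points ensures that $J(G)$ contains at least two points, so that the normality dichotomy is non-degenerate. For the reverse inclusion, any $x\notin S$ lies outside $\bigcup_{|w|=k}\varphi_w(\overline{\B^n})$ for some $k$ and is therefore carried into the basin of escape, where the uniform distortion bound together with the quasiregular Montel theorem gives a neighbourhood on which $G$ is normal; hence $\R^n\setminus S\subseteq F(G)$ and $J(G)\subseteq S$. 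The two inclusions give $J(G)=S$ with $G$ generated by quasiregular maps of degree two, as required.
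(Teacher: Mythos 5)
Your overall strategy---pairing inverse branches into degree-two maps whose distortion is confined to buffer regions that orbits meet only boundedly often---is the same as the paper's, but two of your steps fail as stated, and a third essential ingredient is missing. First, your generators exist only if each symbol $i$ belongs to a pair $(i,j)$ with $\overline{U_i}\cap\overline{U_j}=\emptyset$. Theorem \ref{thm:1} assumes nothing like the open set condition, so the images $\varphi_i(\B^n)$ may all pairwise overlap, and your remedy of ``first separating the paired images using their relative compactness'' is not an operation one can perform: the maps are given and cannot be moved. The paper's solution is Lemma \ref{lem:2}: pass to the iterated system $X^N$ (which has the same attractor), whose elements' images are contained in small balls around their fixed points; the hypothesis that two generators have distinct fixed points then produces $f_1,f_2$ such that every length-$N$ word $\psi_j$ has image a definite distance from the image of $f_1$ or of $f_2$. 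In your proposal that hypothesis is used only to guarantee that $J(G)$ has two points, which misidentifies its role; without the iteration step your construction cannot begin.

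Second, the map on the buffer $\overline{\B^n}\setminus(\overline{U_i}\cup\overline{U_j})$ cannot be a quasiconformal interpolation. The buffer has three boundary components while its image is a ring domain with two, and your $g$ must carry $\partial U_i$ and $\partial U_j$ each injectively onto $\partial\B^n$ while carrying $\partial\B^n$ with degree two onto the outer image sphere. In dimension two, Riemann--Hurwitz forces such a proper degree-two map of a pair of pants onto a ring to have a critical point; in any dimension it is certainly not injective, hence not quasiconformal. The paper confines the branching to an explicit piece: in dimension two the quadratic polynomial $p_j$, whose critical point sits at the midpoint of the two fixed points, is used on the entire complement of two small disks, and Theorem \ref{thm:ann} is invoked only on two genuine ring domains; in dimension three the degree-two behaviour inside $\B^3$ comes from a winding map, and gluing it to the dynamics near infinity uses the Berstein--Edmonds branched-covering extension, which no quasiconformal extension theorem can replace. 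Finally, in $\R^3$ your ``escaping exterior dynamics'' must itself be a degree-two uqr map of power type; such maps do not come for free (classically only power maps of degree $d^{n-1}$ were available in $\R^n$), and the paper has to construct one from a Zorich-type map via a Schr\"oder equation. Your proposal treats dimensions two and three uniformly and never supplies this ingredient, so the three-dimensional half of the theorem is not addressed.
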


The reason for the upper bound on the dimensions considered here is purely due to the extension result of Berstein and Edmonds \cite{BE} being available only in dimension three. Any higher dimensional generalization of their result would have immediate applications here. It is worth noting that in dimension three, our construction of $G$ yields degree two uniformly quasiregular mappings. Uqr mappings of any degree can be constructed via the conformal trap method, see \cite{IM}, but their Julia sets are necessarily tame Cantor sets. Our maps agree with a degree two power map in a neighbourhood of infinity which, in particular, have a superattracting fixed point at infinity.

A quasiball is the image of the unit ball under an ambient quasiconformal mapping. In dimension three, this hypothesis is vacuous since the hypothesis that we consider conformal maps implies these maps are M\"obius. In dimension two, this requirement makes sure that we are able to continue functions defined on $\varphi_i(\B^2)$ to the boundary.

If we write $\fix(f)$ for the set of fixed points of a map $f$, then the condition that $\varphi_i(\B^j)$ is relatively compact in $\B^j$ means that $\fix(\varphi_i)$ is non-empty in $\B^j$.

If $X$ is a conformal IFS acting on $\C$, then every element of $X$ is necessarily a linear map of the form $\varphi(x) =ax+b$ with $|a|<1$. It follows that there exists $R>0$ so that $X$ can be viewed as an IFS acting on $\overline{B(0,R)}$. By conjugating by a dilation, we can assume $R=1$ and apply the above result. The generalized Liouville Theorem (see \cite[p.85]{IMbook}) implies we can make the same observation in dimension three.

Every uniformly quasiregular map in dimension two is quasiconformally conjugate to a holomorphic function, see \cite{S2,Tukia}. However, it is not true that every quasiregular semigroup in dimension two is quasiconformally conjugate to a semigroup of holomorphic functions, see \cite{H}, so we cannot conclude from Theorem \ref{thm:1} that $X$ is quasiconformally conjugate to the Julia set of a rational semigroup.

One can ask if Theorem \ref{thm:1} can be obtained for the Julia set of a single uqr map. We will restrict ourselves to proving this is true in a certain case. 

\begin{definition}
\label{def:1}
We say that a conformal IFS $X$ generated by $\varphi_1,\ldots, \varphi_m$ acting on $\overline{\B^2}$ satisfies the strong disk open set condition if 
\begin{enumerate}[(i)]
\item $\overline{\varphi_i(\B^2)} \subset \B^2$ for $i=1,\ldots, m$,
\item $\overline{\varphi_i(\B^2)} \cap \overline{\varphi_j(\B^2)}  = \emptyset$ for $i\neq j$.
\end{enumerate}
\end{definition}

This is a slight strengthening of the well-known open set condition and ensures there is a definite gap between the various images $\varphi_i(\B^2)$. This allows interpolation of mappings between them. The trade-off here is that $S$ must necessarily be a Cantor set so the following result does not cover, for example, the Koch snowflake.

\begin{theorem}
\label{thm:2}
If a finitely generated conformal IFS $X$ acting on $\overline{\B^2}$ satisfies the strong disk open set condition, then there exists a uqr map $f:\C \to \C$ with $J(f)$ equal to the attractor set $S$ of $X$.
\end{theorem}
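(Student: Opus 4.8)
The plan is to realize $f$ as a degree-$m$ uniformly quasiregular branched cover of $\widehat{\C}=\C\cup\{\infty\}$ whose $m$ inverse branches over the unit disk are exactly $\varphi_1,\ldots,\varphi_m$, modelled on the way a degree-$m$ polynomial with totally disconnected Julia set arises as the ``inverse'' of the iterated function system formed by its inverse branches. Write $A_i=\overline{\varphi_i(\B^2)}$ for the $m$ closed images, which by the strong disk open set condition are compactly contained in $\B^2$ and pairwise disjoint; since the $\varphi_i$ are conformal on a neighbourhood of $\overline{\B^2}$, each $\partial A_i$ is an analytic Jordan curve. Set $G=\overline{\B^2}\setminus\bigcup_{i}\varphi_i(\B^2)$ (the gap region, an $m$-holed disk) and $\overline{E}=\widehat{\C}\setminus\B^2$ (the exterior, containing $\infty$). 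I would define $f$ piecewise: on each $A_i$ put $f=\varphi_i^{-1}$, a conformal homeomorphism onto $\overline{\B^2}$; on $\overline{E}$ put $f$ equal to a fixed holomorphic superattracting model of degree $m$ (in suitable coordinates $z\mapsto z^{m}$), so that $f(\overline{E})=\overline{E}'\subset\overline{E}$ with $\infty$ a superattracting fixed point; and on the gap $G$ interpolate by a quasiregular branched cover $f\colon G\to\overline{E}$ matching the already-prescribed boundary values $\varphi_i^{-1}$ on each $\partial A_i$ and the power model on $\partial\B^2$.

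First I would check the topological bookkeeping so that these pieces assemble into a degree-$m$ branched cover. Each $\partial A_i$ maps homeomorphically onto $\partial\B^2$, so over points of $\B^2$ the map has exactly one preimage in each $A_i$ and none elsewhere, giving degree $m$; over $\overline{E}$ the preimages come from $G$ and $\overline{E}$, arranged (as in the polynomial model, where the two pieces cover nested exteriors) to again total $m$. Riemann--Hurwitz forces $\sum_{p}(e_p-1)=2m-2$ branch contributions; the superattracting point at $\infty$ supplies $m-1$ and the remaining $m-1$ occur inside $G$, with critical values landing in $\overline{E}$ and hence escaping to $\infty$. The existence of the interpolating quasiregular branched cover on $G$ with this prescribed boundary and branch data is standard given the definite gaps between the $A_i$ (which provide room to interpolate) and the smoothness of the curves $\partial A_i$; this is a routine surgery rather than the crux.

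The heart of the argument, and the step I expect to be the main obstacle, is upgrading $f$ from quasiregular to \emph{uniformly} quasiregular, i.e.\ bounding the distortion of every iterate $f^{n}$ independently of $n$. The construction makes $f$ conformal on $\bigcup_i A_i$ and on $\overline{E}$, so the only non-conformal step an orbit can undergo is the single application of $f|_{G}$ while the point lies in $G$. I would show that each forward orbit meets $G$ at most once: the set of points whose entire forward orbit stays in $\bigcup_i A_i$ is exactly $\bigcap_{n}\bigcup_{|w|=n}\varphi_w(\overline{\B^2})=S$, and any other point eventually lands in $G$, after which $f(G)\subseteq\overline{E}$ and $f(\overline{E})\subseteq\overline{E}$ trap it in the conformal region $\overline{E}$ forever. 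Consequently the dilatations accumulate to at most $K_0:=K(f|_{G})$ along every orbit, so $K(f^{n})\le K_0$ for all $n$ and $f$ is uniformly quasiregular; the quasiregular Montel theory of \cite{IM,F} then applies to $f$.

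It remains to identify $J(f)$ with $S$. The non-escaping set is precisely $S$, while every point of $\widehat{\C}\setminus S$ lies in the basin of the superattracting fixed point $\infty$, on which $\{f^{n}\}$ converges locally uniformly to $\infty$ and is therefore normal; hence $J(f)\subseteq S$. For the reverse inclusion I would use that $f$ is uniformly expanding on $S$, being on each $A_i$ the inverse of the contraction $\varphi_i$: since $S$ is a Cantor set, every one of its points is a limit of Fatou points on which $f^{n}\to\infty$, while $f^{n}$ keeps $S$ inside $\overline{\B^2}$, contradicting equicontinuity at points of $S$. Thus $S\subseteq J(f)$, and combining the two inclusions gives $J(f)=S$, as required.
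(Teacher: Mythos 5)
Your proposal reproduces the paper's strategy almost exactly: conformal inverse branches $\varphi_i^{-1}$ on the images $A_i$, a holomorphic degree-$m$ power model near $\infty$, a quasiregular branched interpolation on the region in between, the observation that every orbit passes through the non-conformal region at most a bounded number of times (once, in your decomposition; twice in the paper's, where the transition is split into $\overline{\B^2}\setminus\bigcup_i E_i$ and $\{z:1<|z|<2\}$), so that all iterates have distortion bounded by that of the interpolation and $f$ is uqr, and finally the identification $J(f)=S$ via the escaping set together with density of escaping points near the Cantor set $S$. Your Riemann--Hurwitz bookkeeping ($m-1$ branching at $\infty$, $m-1$ inside the gap) is consistent with the paper's map, which carries a single interior branch point of local degree $m$. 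These dynamical parts are correct and match the paper's concluding paragraph.

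The genuine gap is the step you explicitly set aside as ``routine surgery rather than the crux'': producing a quasiregular branched cover of the $(m+1)$-connected gap region $G$ onto an annulus, of degree $m$, with boundary values $\varphi_i^{-1}$ on each $\partial A_i$ and a degree-$m$ covering map on $\partial\B^2$. This is not an off-the-shelf result: the quasiconformal Annulus Theorem and its variant Theorem \ref{thm:ann} give only degree-one (quasiconformal) interpolations between two boundary components, and there is no standard theorem that hands you a \emph{branched} quasiregular interpolation with prescribed covering boundary data. That construction is in fact the bulk of the paper's proof (Steps 1--5): the holes are quasiconformally collapsed onto round disks $B(w_i,\epsilon)$ using Theorem \ref{thm:ann}; these disks are then quasiconformally transported to disks centred at the scaled $m$-th roots of unity $v_i=(1-2\epsilon)\omega_i$ --- the key symmetrization idea --- so that the explicit model $h_3(z)=z^m$ identifies all $m$ holes at once and supplies exactly the required interior branching; the resulting ring domain is normalized by a further quasiconformal map; and a separate degree-$m$ unbranched interpolation across $\{z:1<|z|<2\}$, built by cutting the annulus along hyperbolic geodesics into $m$ simply connected pieces and applying Riemann maps plus Douady--Earle extension, matches everything to $z\mapsto z^m$ on $\{z:|z|=2\}$. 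Until you supply such a construction (or a citation that genuinely covers it), your proof asserts its central object into existence; once you do, your argument coincides with the paper's.
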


As an immedate consequence of the fact that every uqr map on $\C$ is quasiconformally conjugate to a holomorphic function, we have the following.

\begin{corollary}
\label{cor:1}
If a finitely generated conformal IFS $X$ acting on $\overline{\B^2}$ satisfies the strong disk open set condition, then there exists a polynomial $p$ and a quasiconformal map $h:\C \to \C$ so that the attractor set $S$ of $X$ is equal to $h(J(p))$.
\end{corollary}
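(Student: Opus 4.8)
The plan is to combine Theorem \ref{thm:2} with the rigidity theorem for uniformly quasiregular maps in the plane, and then track the behaviour at infinity to force the conjugate to be a polynomial rather than a general rational map. First I would invoke Theorem \ref{thm:2} to produce a uqr map $f:\C\to\C$ with $J(f)=S$. I expect the construction there, in parallel with the dimension-three construction discussed in the remarks following Theorem \ref{thm:1}, to yield a degree-two map that agrees with the power map $z\mapsto z^2$ in a neighbourhood of infinity; consequently $\infty$ is a superattracting fixed point and $f$ extends to a uniformly quasiregular self-map of the Riemann sphere $\widehat{\C}$ of degree two.

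Next, since $f$ is uniformly quasiregular of degree at least $2$, the rigidity theorem of Sullivan and Tukia (see \cite{S2,Tukia}) supplies a quasiconformal map $\phi:\widehat{\C}\to\widehat{\C}$ and a rational map $g$ with $\phi\circ f=g\circ\phi$. Because $\phi$ is a homeomorphism conjugating the two dynamical systems, it carries the set of non-normality of the family generated by $f$ onto that generated by $g$, so $J(g)=\phi(J(f))=\phi(S)$, and hence $S=\phi^{-1}(J(g))$.

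It then remains to arrange that $g$ is a polynomial. Since $f$ behaves like $z\mapsto z^2$ near $\infty$, the point $\infty$ is a fixed point of $f$ whose local (topological) degree equals the global degree two. Local degree is preserved by the topological conjugacy, so $\phi(\infty)$ is a fixed point of $g$ of local degree two; as $\deg g=2$, this fixed point is totally invariant, i.e. its only $g$-preimage is itself. Post-composing $\phi$ with a M\"obius transformation carrying $\phi(\infty)$ to $\infty$ changes neither the quasiconformality of $\phi$ nor the rationality of $g$, so I may assume $g$ has a totally invariant superattracting fixed point at $\infty$. Equivalently $g^{-1}(\infty)=\{\infty\}$, meaning $g$ has a pole only at $\infty$, so $g$ is a degree-two polynomial $p$. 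With this normalisation $\phi$ fixes $\infty$, whence $h:=\phi^{-1}$ restricts to a quasiconformal self-map of $\C$, and $S=h(J(p))$, as required.

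The only substantive step is the last one: upgrading the rational conjugate to a polynomial. This is precisely where I rely on the fine structure of the map from Theorem \ref{thm:2}, namely that it agrees with a degree-two power map near infinity, giving a totally invariant superattracting fixed point; if instead Theorem \ref{thm:2} were only known to furnish an arbitrary degree-two uqr map, one would be left with a rational $g$ and would have to work harder to guarantee a totally invariant point. Everything else is a direct application of Theorem \ref{thm:2}, the planar rigidity theorem, and the fact that quasiconformal conjugacy preserves Julia sets.
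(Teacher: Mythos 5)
Your proposal is correct and follows essentially the same route as the paper, which deduces the corollary in one line from Theorem \ref{thm:2} together with the Sullivan--Tukia fact that planar uqr maps are quasiconformally conjugate to holomorphic maps; your totally-invariant-fixed-point argument just makes explicit why the conjugate can be taken to be a polynomial. The only slip is that the map built in Theorem \ref{thm:2} agrees with $z\mapsto z^m$ (where $m$ is the number of generators) near infinity rather than $z\mapsto z^2$, but this changes nothing in your argument beyond replacing degree two by degree $m$.
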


To extend Theorem \ref{thm:2} to $\B^3$, we would need to replace $z\mapsto z^m$ on a neighbourhood of infinity with uqr power maps in $\R^3$. These do not necessarily exist for every degree $m\geq 2$. Consequently, one could conceivably obtain an analogue to Theorem \ref{thm:2} in dimension three for an IFS which is generated by $m$ conformal maps, where there exists a power map of degree $m$. 

Instead of considering an IFS acting on $\overline{\B^3} \subset \R^3$, if we consider an IFS acting on a solid torus in $\R^3$,
we can obtain IFSs where the attractor set is a wild Cantor set such as Antoine's necklace. There do exist uqr maps in $\R^3$ where the Julia set is an Antoine's necklace, but as far as the author is aware there is no systematic method for showing that wild Cantor sets are or are not Julia sets of uqr maps.

The paper is organized as follows. In section 2, we recall preliminary material on IFSs and quasiregular semigroups. In section 3, we prove the results in dimension two. In section 4, we prove the results in dimension three.

The author would like to thank Dave Sixsmith for pointing out a typo, and for the referee for numerous comments that helped to improve the exposition.

\section{Preliminaries}

\subsection{IFSs}

A finitely generated iterated function system (IFS) $X$ acting on a closed subset $D$ of $\R^n$ is a collection of contractions $\{ \varphi_1 , \ldots, \varphi_m \}$ with $\varphi_i:D \to D$ for $i=1,\ldots, m$. This means that, for $i=1,\ldots, m$, there is a constant $s_i \in (0,1)$ so that $|\varphi_i(x) - \varphi_i(y) | \leq s_i|x-y|$ for all $x,y\in D$. We call the smallest such constant $s_i$ the contraction factor of $\varphi_i$.
By a famous result of Hutchinson \cite{Hutchinson}, there then exists a unique non-empty compact set $S \subset D$ which is invariant under $X$, that is,
\[ S = \bigcup_{i=1}^m \varphi_i(S).\]
Moreover, if we define the $k$'th iterate of $X$ on a compact set $E$ via $X^0(E) = E$, $X^1(E) = \bigcup_{i=1}^m\varphi_i(E)$ and $X^k(E) = X(X^{k-1}(E))$ for $k\in \N$, then
\[ S = \bigcap _{k=1}^{\infty} X^k(D).\]
The set $S$ is called the attractor set for $X$. Attractor sets are a good way to generate fractals such as the von Koch snowflake, or Sierpinski gasket. If the $\varphi_i$ are conformal mappings, we say $X$ is a conformal IFS.
We refer to \cite{Falconer} for more details on IFSs, attractor sets and fractals.

\subsection{Julia sets and rational semigroups}

Another way to generate fractal behaviour is through the Julia sets of rational maps. If $f:\overline{\C} \to \overline{\C}$ is a rational map, then the Julia set is defined as follows: $z \in J(f)$ if and only if there is no neighbourhood $U$ of $z$ so that the family $\{ f^m|U : m\in \N\}$ is normal. Here, $f^m$ denotes the $m$'th iterate of $f$. It is well-known that $J(f)$ is a non-empty, closed, perfect subset of $\overline{\C}$ and there are examples, such as Latt\`es maps, where $J(f) = \overline{\C}$.
In fact, if $J(f)$ contains an open set, it must be all of $\overline{\C}$. The Fatou set is the complement of the Julia set. See for example Milnor's book \cite{Milnor} for an introduction to rational dynamics.

A rational semigroup $G$ is generated by a collection (assumed in this paper to be finite) of rational maps $\{ f_1,\ldots, f_m\}$ acting on $\overline{\C}$. The Julia set $J(G)$ and the Fatou set $F(G)$ can be defined via normality in exactly the same way as for a single rational map. Namely, $z\in F(G)$ if and only if there exists a neighbourhood $U$ of $z$ so that $G|_U$ is a normal family and $z\in J(G)$ if and only if there is no neighbourhood $U$ of $z$ for which $G|_U$ is a normal family.
There are differences in the properties for Julia sets of rational maps and for Julia sets of rational semigroups. For example, \cite[Example 1]{HM} shows how the Julia set of a rational semigroup can have non-empty interior, and yet not be all of $\overline{\C}$.

\subsection{Quasiregular semigroups}

The natural way to extend the notion of rational semigroups into higher real dimensions is through uniform quasiregularity. Briefly, if $n\geq 2$, a continuous map $f:\R^n \to \R^n$ is called \emph{quasiregular} if $f\in ACL^n$, and there exists $K\geq 1$ so that $|f'(x)| ^n \leq KJ_f(x)$ almost everywhere. Quasiregular maps are sometimes called mappings of bounded distortion, and this captures the main property of such mappings. The smallest $K$ for which the above inequality holds is called the outer distortion $K_O(f)$. If $f$ is quasiregular, then we also have $J_f(x) \leq K' \inf_{|h|=1} |f'(x)h|^n$ almost everywhere. The smallest $K'$ for which this holds is called the inner distortion $K_I(f)$. The maximal distortion of $f$ is $K(f) = \max\{ K_O(f) , K_I(f) \}$. 
We can extend quasiregular mappings to be defined at infinity, or have poles, by composing with suitable M\"obius mappings which send the point at infinity to $0$.
We refer to, for example, \cite{Rickman} for foundational material on quasiregular mappings.

The composition of two quasiregular mappings is again quasiregular, but typically the distortion increases (consider iterating the map $x+iy \mapsto Kx+iy$ for $K>1$).
A uniformly quasiregular mapping, or uqr map for short, is defined by the property that there is a uniform bound on the distortion of the iterates. Trivially, every holomorphic map is uqr.

A quasiregular semigroup $G$ generated by $\{f_1,\ldots, f_m \}$ acting on $\overline{\R^n}$ has the property that there exists $K\geq 1$ so that every element of $G$ is a quasiregular map with maximal distortion at most $K$. We necessarily have that every element of a quasiregular semigroup is a uniformly quasiregular map.. Clearly in dimension two with $K=1$, we just recover rational semigroups.

The Julia set and Fatou set are defined in exactly the same way as for rational semigroups. Here, the quasiregular version of Montel's Theorem plays an important role in studying the properties of these dynamical objects.
See \cite{IM} for the introduction of quasiregular semigroups to the literature and \cite{F} for more recent developments.

\subsection{Quasiconformal Annulus Theorem}

We will repeatedly use Sullivan's Annulus Theorem in the quasiconformal category, see \cite{Sullivan}, and \cite[Theorem 5.8]{TV} for a quantative version. This states that if $A$ is an open annulus in $\R^n$, for $n\geq 2$, then every quasiconformal embedding $f:\R^n \setminus A \to \R^n$ can be extended to a quasiconformal map $\widetilde{f}:\R^n \to \R^n$ with $f|_{\R^n \setminus A'} = f$, where $\overline{A} \subset \operatorname{int}(A')$.

We will apply the annulus theorem when the ring domain has quasispheres as its boundary components. Here, quasispheres are the image of the unit sphere $S^{n-1}$ under an ambient quasiconformal mapping of $\R^n$. While this minor extension of the Annulus Theorem is presumably well-known, we could not find a specific reference and so include a proof for the convenience of the reader. We will need the following weak version of a Uniformization Theorem for rings.

\begin{lemma}
\label{lem:ringdomain}
Let $n\geq 2$ and $0<a<b$. Then if $U,V$ are quasiballs in $\R^n$, with $\overline{V} \subset U$, then there exists a quasiconformal map $f:\R^n\to\R^n$ with $f(U\setminus \overline{V}) =  \{x: a<|x|<b \}$.
\end{lemma}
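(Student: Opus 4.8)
The plan is to reduce the statement to Sullivan's Annulus Theorem applied to the \emph{round} annulus $A = \{x \in \R^n : a<|x|<b\}$, and to take the desired map to be the inverse of the resulting extension. Write $R = U\setminus\overline{V}$ for the ring domain in question. The key observation is that both complements split into matching inner and outer pieces: $\R^n\setminus A = \{|x|\le a\}\cup\{|x|\ge b\}$, while, since $\overline V\subset U$,
\[ \R^n\setminus R = (\R^n\setminus U)\cup \overline{V}. \]
I would build a quasiconformal map on $\R^n\setminus A$ that carries the inner cap $\{|x|\le a\}$ onto $\overline{V}$ and the outer region $\{|x|\ge b\}$ onto $\R^n\setminus U$, extend it by the Annulus Theorem, and invert.

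First I would produce the two pieces from the quasiball hypothesis. Since $V$ is a quasiball there is an ambient quasiconformal $g_V:\R^n\to\R^n$ with $g_V(\B^n)=V$; composing with the dilation $x\mapsto x/a$ gives an ambient quasiconformal map $f_0$ with $f_0(\{|x|<a\})=V$, hence $f_0(\{|x|\le a\})=\overline{V}$. Likewise, choosing $g_U(\B^n)=U$ and composing with $x\mapsto x/b$ gives an ambient quasiconformal map $f_1$ with $f_1(\{|x|<b\})=U$, so that $f_1$ maps the closed exterior $\{|x|\ge b\}$ onto $\R^n\setminus U$. Both $f_0$ and $f_1$ may be taken orientation preserving.

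Next I would glue. Because $\overline{V}\subset U$, the target sets $\overline{V}$ and $\R^n\setminus U$ are disjoint, and the two closed caps $\{|x|\le a\}$ and $\{|x|\ge b\}$ are disjoint with the open annulus $A$ between them. Hence defining $f:=f_0$ on $\{|x|\le a\}$ and $f:=f_1$ on $\{|x|\ge b\}$ yields a well-defined quasiconformal embedding $f:\R^n\setminus A\to\R^n$ whose image is exactly $(\R^n\setminus U)\cup\overline{V}=\R^n\setminus R$; it is quasiconformal since near every point it coincides with one of the ambient maps $f_0,f_1$. Applying the Annulus Theorem extends $f$ to a quasiconformal homeomorphism $\widetilde f:\R^n\to\R^n$. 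As $\widetilde f$ is a bijection agreeing with $f$ off $A$, we get $\widetilde f(\R^n\setminus A)=\R^n\setminus R$ and therefore $\widetilde f(A)=R$. Then $\widetilde f^{-1}$ is quasiconformal with $\widetilde f^{-1}(U\setminus\overline{V})=\{x:a<|x|<b\}$, which is the assertion.

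The routine points to verify are that $f_0,f_1$ are quasiconformal on full neighbourhoods of the closed caps (immediate, since they are ambient) and that the inner/outer nesting is respected so the two germs assemble into a single embedding. I expect the only genuine subtlety — the main obstacle — to be the gluing and extension step: one must check that the combined $f$, which is defined on the non-open set $\R^n\setminus A$, satisfies the hypotheses of the Annulus Theorem, and that the extension cannot ``turn $A$ inside out.'' The latter is forced by bijectivity once $f_0$ and $f_1$ are assembled consistently, which is the reason for arranging both to preserve orientation.
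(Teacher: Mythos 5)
Your proof is correct, and it uses exactly the same two ingredients as the paper --- ambient quasiconformal normalizations coming from the quasiball hypothesis on $U$ and $V$, plus one application of the Annulus Theorem --- but it organizes them in the opposite direction. The paper works forward: it first maps $U$ onto $B(0,b)$ by an ambient map $\varphi$, then corrects the inner boundary by a second map $\psi_{Ann}$ that equals a normalization $\psi$ of $\varphi(V)$ on a small ball and the identity near $\partial B(0,b)$, with the Annulus Theorem interpolating between the two; this forces bookkeeping with intermediate radii $r<s<b$ and a radial squeeze map to make the gluing regions nest, and the final map is the composition $\psi_{Ann}\circ\varphi$. You instead build the \emph{inverse} map in a single step: glue the two normalizations on the two components of $\R^n\setminus\{x: a<|x|<b\}$, extend across the round annulus by the Annulus Theorem, and invert at the end. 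This is somewhat cleaner --- no intermediate radii, no radial stretch, and the consistency of the gluing is automatic because the images $\overline{V}$ and $\R^n\setminus U$ are disjoint (so the inner cap necessarily lands in the hole of the outer image) --- at the cost of a final inversion, which is harmless since inverses of quasiconformal maps are quasiconformal. One genuine merit of your write-up is the explicit remark that $f_0$ and $f_1$ must both be taken orientation-preserving, without which no homeomorphic extension across the annulus can exist; the paper's gluing of $\psi$ with the identity quietly requires the same normalization of $\psi$, so your version actually records a hypothesis the paper leaves implicit.
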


\begin{proof}
Since $U$ is a quasiball, there exists a quasiconformal map $\varphi:\R^n \to \R^n$ with $\varphi(U) = B(0,b)$. Then there exists $r<b$ so that if $V_1 = \varphi(V)$, then $V_1 \subset \overline{B(0,r)}$. Since $V_1$ is also a quasiball, there exists a quasiconformal map $\psi :\R^n \to \R^n$ with $\psi(V_1) = B(0,a)$. Moreover, given $s\in (\max\{a,r\},b)$, by postcomposing by a radial quasiconformal map of the form
\[ x\mapsto \left \{ \begin{array}{cc} x, & |x| \leq a,\\ (|x|/a)^Kx & |x|>a \end{array} \right . \]
for a suitably small $K>0$, we may assume that $\overline{\psi(B(0,r))} \subset B(0,s)$. 
For $r<r_1<s_1<s$, we apply the Annulus Theorem with $A = \{x: r_1<|x|<s_1\}$ and $A' = \{ x: r<|x|<s\}$ to replace
replace $\psi$ by $\psi_{Ann}$, where $\psi_{Ann}|_{\overline{B(0,r)}} = \psi$ and $\psi_{Ann}|_{\R^n \setminus B(0,s)}$ is the identity. We conclude that $\psi_{Ann} \circ \varphi$ maps $U\setminus \overline{V}$ onto $\{x: a<|x|<b \}$.
\end{proof}

\begin{theorem}
\label{thm:ann}
Let $n\geq 2$, and let $U_1,U_2,V_1,V_2$ be bounded quasiballs with $\overline{V_i} \subset U_i$ for $i=1,2$. If there exists a quasiconformal map $f: \R^n \setminus (U_1 \setminus \overline{V_1}) \to \R^n \setminus ( U_2\setminus \overline{V_2})$, then there exists a quasiconformal map $\widetilde{f} : U_1 \setminus \overline{V_1} \to U_2 \setminus \overline{V_2}$ which agrees with $f$ on both boundary components of $U_1 \setminus \overline{V_1}$.
\end{theorem}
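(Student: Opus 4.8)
The plan is to reduce everything to the case of round annuli, where the quoted Annulus Theorem applies verbatim, by straightening each ring domain with Lemma \ref{lem:ringdomain}.

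First I would apply Lemma \ref{lem:ringdomain} twice. Since $U_i,V_i$ are quasiballs with $\overline{V_i} \subset U_i$, there exist quasiconformal maps $g_i:\R^n \to \R^n$ with $g_i(U_i \setminus \overline{V_i}) = A_i := \{x : a_i < |x| < b_i\}$ for $i=1,2$ (the radii may be chosen freely). Each $g_i$ then carries the complement $\R^n \setminus (U_i \setminus \overline{V_i})$ onto $\R^n \setminus A_i$, which splits into the bounded cap $\overline{B(0,a_i)}$ and the unbounded piece $\R^n \setminus B(0,b_i)$. Consequently $F := g_2 \circ f \circ g_1^{-1}$ is a quasiconformal map of $\R^n \setminus A_1$ onto $\R^n \setminus A_2$, as a composition of quasiconformal maps.

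Before invoking the Annulus Theorem I would record one bookkeeping fact. Because $\overline{V_1}$ is compact, $f(\overline{V_1})$ is compact, so $f$ must send the bounded component $\overline{V_1}$ of its domain to the bounded component $\overline{V_2}$ of its target (and the unbounded component to the unbounded one); no swap can occur. Hence $F$ maps the inner sphere $|x|=a_1$ onto $|x|=a_2$ and the outer sphere $|x|=b_1$ onto $|x|=b_2$. Viewing $F$ as a quasiconformal map from $\R^n \setminus A_1$ into $\R^n$, the Annulus Theorem yields a quasiconformal extension $\widetilde{F}:\R^n \to \R^n$. Since $\widetilde{F}$ is a homeomorphism of $\R^n$ agreeing with $F$ off $A_1$, it must carry $A_1$ onto $\R^n \setminus \widetilde{F}(\R^n \setminus A_1) = \R^n \setminus (\R^n \setminus A_2) = A_2$, so the filled-in map respects the ring structure automatically.

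Finally I would set $\widetilde{f} := g_2^{-1} \circ \widetilde{F} \circ g_1$. This is quasiconformal, it carries $U_1 \setminus \overline{V_1}$ onto $U_2 \setminus \overline{V_2}$, and on $\partial U_1 \cup \partial V_1$ it coincides with $g_2^{-1} \circ F \circ g_1 = f$, which is exactly the required agreement on both boundary components. The only genuinely non-formal ingredient is the round Annulus Theorem itself; the rest is straightening via Lemma \ref{lem:ringdomain} together with tracking which boundary sphere maps to which, the latter settled by the compactness observation. The one technical point to check is the behaviour at infinity, but this is already subsumed in the $\R^n$-formulation of the Annulus Theorem, so I expect no real obstacle there.
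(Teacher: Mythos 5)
Your proposal is correct and takes essentially the same route as the paper: straighten the ring domain via Lemma \ref{lem:ringdomain}, extend across the round annulus with the Annulus Theorem, identify the image of the annulus by taking complements, and conjugate back. The only difference is cosmetic --- the paper straightens just the domain ring $U_1 \setminus \overline{V_1}$ and applies the Annulus Theorem directly to $f \circ \varphi^{-1}$, whereas you also straighten the target ring, a harmless extra step.
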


\begin{proof}
By Lemma \ref{lem:ringdomain}, there exists a quasiconformal map $\varphi: \R^n \to \R^n$ with $\varphi( U_1 \setminus \overline{V_1}) = A:= \{x : 1<|x|<2 \}$. The quasiconformal map $f\circ \varphi^{-1}: \R^n \setminus \{x: 1\leq |x| \leq 2 \} \to \R^n$ can be extended to a quasiconformal map $f_{Ann} :\R^n \to \R^n$ by the Annulus Theorem. Moreover, $f_{Ann} |_{\partial A} = (f\circ \varphi^{-1} )|_{\partial A}$. It follows that $\widetilde{f} := f_{Ann}\circ \varphi$ is the required extension of $f$.
\end{proof}

\section{Dimension two}

\subsection{Quadratic polynomials}

In this section we will prove Theorem \ref{thm:1} in dimension two and Theorem \ref{thm:2}. Our construction in dimension two is direct, and the semigroup will be constructed from degree two maps, each of which will glue two of the conformal contractions from $X$ into a quadratic polynomial. We start by considering certain quadratic polynomials.

Let $c\in \B^2$, $a\in \C$ and let $p(z) = a(z-c)^2-10$. Then it is clear that, using the principal branch of the square root, $p^{-1}(0) = \{ c \pm (10/a)^{1/2} \}$ and we will denote these points by $z_1,z_2$. The pre-image $p^{-1}( S(0,10) )$ is a topological figure of eight, with $c$ at the crossing point, and enclosing $z_1$ and $z_2$ in different bounded components of the complement.  To see this, observe that $p(c) = -10$ and hence $c$ lies on $\gamma := p^{-1}( S(0,10) )$. The pre-image of a ray emanating from $-10$ is a straight line passing through $c$, noting that $\gamma$ is invariant under a rotation through $\pi$ about $c$. If the ray has angle $\theta$ strictly  between $-\pi/2$ and $\pi/2$ with respect to the semi-infinite line segment $[-10,\infty)$, then the pre-image intersects $\gamma$ twice. Otherwise it does not. As $\theta$ traverses the interval $(-\pi/2, \pi/2)$, $\gamma$ sweeps out a figure of eight with the claimed properties.

Moreover, the figure eight is oriented so that the straight line passing through $z_1,z_2$ and $c$ has slope $\tan( -\arg(a) / 2)$. This is since $z_1,z_2$ and $c$ clearly lie on the same straight line corresponding to the ray in the image with angle $\theta =0$. One can then check that the straight line $\sigma(t) = c+te^{-i\arg(a)/2}$ maps onto this ray under $p$.

\begin{lemma}
\label{lem:1}
Let $c\in \B^2$, $a\in \C$ and let $p(z) = a(z-c)^2-10$. For $\epsilon>0$, if $D_1$ denotes the disk $B(z_1,(\sqrt{1+\epsilon} - 1)|10/a|^{1/2} )$, then $p(D_1) \subset B(0,10\epsilon)$. The analogous statement holds for $D_2$.
\end{lemma}

\begin{proof}
Let $z = z_1 + \delta e^{it}$, where $\delta = (\sqrt{1+\epsilon} - 1)|10/a|^{1/2}$. Then
\begin{align*}
|p(z)| &= |p(z_1 + \delta e^{it})| \\
&= | a(  (10/a)^{1/2} + \delta e^{it} )^2 -10 | \\
&= \left | 2\delta e^{it} (10a)^{1/2} + a\delta^2 e^{2it} \right | \\
&\leq 2\delta |10a|^{1/2} + |a|\delta^2 \\
&= 20(\sqrt{1+\epsilon}-1) + 10(\sqrt{1+\epsilon} -1 )^2 \\
&=10\epsilon.
\end{align*}
\end{proof}

\subsection{The open set condition}

The open set condition (OSC) for an IFS is a particularly nice situation to be in. For example, if the OSC holds, one can make easy computations for the Hausdorff dimension of the attractor set. The OSC states that there is an open set $U$ so that 
\[ \varphi_i(U) \cap \varphi_j(U) = \emptyset\]
for $i\neq j$. In our situation, we do not make an assumption that the OSC holds. However, we will be able to pass to a similar property for an iterated version of the IFS.

Suppose that we have an IFS $X$ generated by conformal contractive mappings $\varphi_1,\ldots, \varphi_k$ on $\B^2$ with contraction factors $\lambda_1,\ldots, \lambda_k$ respectively. Since by assumption $\varphi_i(\overline{\B^2}) \subset \B^2$ for $i=1,\ldots, k$, the contraction mapping theorem states that $\fix(\varphi_i)$ is a unique point in $\B^2$. Each fixed point of $\varphi_i$ lies in the attractor set $S$.

For any $N\in N$, denote by $X^N$ the IFS generated by words of length $N$ in $\varphi_1,\ldots, \varphi_k$. 
For definiteness, denote such words by $\psi_1,\ldots, \psi_m$. Of course, $m$ could just be $k^N$, but it could be smaller if, for example, $\varphi_i$ and $\varphi_j$ commute.
The attractor set of $X^N$ is the same as the attractor set of $X$ since any infinite word consisting of elements of the $\varphi_i$ can also be composed of the elements of the $\psi_i$. This will allow us to pass from $X$ to $X^N$ where convenient, since we will not alter the attractor set.

\begin{lemma}
\label{lem:2}
Suppose that $X$ is generated by $\varphi_1,\ldots, \varphi_k$ and that $\#\{\fix(\varphi) : \varphi \in X \} \geq 2$. Given $\delta >0$, we can find $N\in \N$ so that if $X^N$ is generated by $\psi_1,\ldots, \psi_m$ then there exist $f_1,f_2 \in \{ \psi_1 , \ldots , \psi_m \}$ with the property that for any $j\in \{1,\ldots, m\}$, there exists $i\in \{1,2\}$ so that the distance between $f_i(\overline{\B^2})$ and $\psi_j(\overline{\B^2})$ is at least
\[  \left ( \frac{1}{ 2}- 2\delta \right )\max_{p\neq q} | \fix(\varphi_p) - \fix(\varphi_q) |.\]
\end{lemma}

In particular, we can ensure that $f_i(\overline{\B^2}) \cap \psi_j(\overline{\B^2})  = \emptyset$ with a definite distance between them.

\begin{proof}
Denote by $w_i$ the unique element of $\fix(\varphi_i)$.
Let $s = \max_i \{ \lambda_i\}<1$ and set $r =\max_{p\neq q} | w_p-w_q |$. By relabelling, we may assume this maximum is achieved for $\varphi_1,\varphi_2$. Given $\delta >0$, choose $N$ large enough so that $s^N < \delta r$. 
 Denote by $f_1,f_2$ the maps $\varphi_1^N,\varphi_2^N$ respectively. These two maps are different since they have different fixed points. Moreover, these maps are contained in the collection $\{\psi_1,\ldots, \psi_m\}$. By construction, $f_i(\overline{\B^2}) \subset B(w_i, \delta r)$ for $i=1,2$.

Now given $j\in \{1,\ldots, m\}$, the map $\psi_j$ has fixed point $w_j$ and we have $\psi_j(\overline{\B^2}) \subset B(w_j, \delta r)$. Since either $|w_1-w_j| \geq r/2$ or $|w_2-w_j| \geq r/2$, by choosing whichever is the larger, we conclude that
the balls $\overline{B(w_i,\delta r)}$ and $\overline{B(w_j, \delta r)}$ have distance at least $(1/2  - 2\delta )r$.
The lemma then follows.
\end{proof}

\subsection{Proof of Theorem \ref{thm:1} in dimension two}

Replace the IFS $X$ generated by $\varphi_1,\ldots, \varphi_k$ with $X^N$ generated by $\psi_1,\ldots, \psi_m$, recalling Lemma \ref{lem:2}. These both have attractor set $S$. Recall also the maps $f_1,f_2 \in \{\psi_1 , \ldots, \psi_m\}$. Since the fixed points $w_j$ of $\psi_j$ are contained in $\B^2$, it follows that there exists $T<1$ so that $\max_j |w_j| \leq T$.

For each $j\in \{1,\ldots, m\}$, consider $\psi_j$ with fixed point $w_j$ and the corresponding $f_i$, for $i\in \{1,2\}$, with fixed point $x_i$ so that the conclusions of Lemma \ref{lem:2} hold. 
We will construct a certain quadratic polynomial of the form $p_j(z) = a_j(z-c_j)^2-10$ so that $f_i(\overline{\B^2})$ and $\psi_j(\overline{\B^2})$ are small enough that they are each contained in disks $U,V$ of a definite size, which are in turn contained in each lobe of $p_j^{-1}(S(0,10))$ respectively. From here, we construct a quasiregular map which agrees with $f_i^{-1}$ and $\psi_j^{-1}$ on $f_i(\overline{B^2})$ and $\psi_j(\overline{\B^2})$ respectively, maps $\partial U$ and $\partial V$ onto $S(0,2)$ and maps the figure of eight $p_j^{-1}(S(0,10))$ onto $S(0,10)$.

To determine $p_j$, set $c_j=(w_j+x_i)/2$, choose $a_j$ so that $c_j\pm \sqrt{10/a_j}$ gives the two points $w_j$ and $x_i$, and if
\[ d:= d( \overline{B(x_i,\delta r)} ,\overline{ B(w_j , \delta r)} ),\]
we have $2|10/a_j|^{1/2}=d$. 

Let $U = B(x_i ,(\sqrt{6/5} - 1)|10/a_j|^{1/2} )$ and $V =  B(w_j ,(\sqrt{6/5} - 1)|10/a_j|^{1/2} )$. Now, given $\delta >0$ and $\epsilon = 1/5$ we first need to ensure, from Lemma \ref{lem:1}, that 
\[ \psi_j(\overline{\B^2}) \subset B(w_j,\delta r) \subset V \subset \B^2\]
for any $j\in \{1,\ldots, m\}$. Since $|w_j|\leq T<1$, this will be achieved as long as 
\[ T + (\sqrt{6/5}-1)|10/a_j|^{1/2} <1,\]
that is, if
\[ |a_j| > 10 \left ( \frac{ \sqrt{6/5}-1 }{1-T} \right )^2.\]
By the construction of $p_j$, we see that $p_j(\partial U)$ and $p_j(\partial V)$
are both $\partial B(0,2)$. Consequently, we need $\delta$ to be chosen small enough so that $B(w_j, \delta r) \subset V$.
By symmetry, we will also obtain that $B(x_i,\delta r) \subset U$.
We require $(\sqrt{6/5} - 1)d > 2\delta r$. Since $d \geq (1/2 - 2\delta )r$, recalling Lemma \ref{lem:2}, some elementary calculations show that we need
\[ \delta < \frac{1-\sqrt{5/6}}{4} \approx 0.0218.\]
In particular, if we choose $\delta <1/100$, we can ensure $B(w_j, \delta r) \subset V$.
Recalling that given $j \in \{1,\ldots, m\}$ we choose an appropriate $i\in \{1,2\}$, for $j\in \{1,\ldots, m\}$, we then define the degree two quasiregular mapping $g_j$ as follows:
\begin{equation}
g_j(z) = \left \{ \begin{array}{ll}
f_i^{-1}(z), &   z\in f_i(\overline{\B^2}),\\
\psi_j^{-1}(z), & z \in \psi_j(\overline{\B^2}) \\
p_j(z), & z\in \C \setminus (U\cup V)\\
q_j(z), & z\in (U  \setminus  f_i(\overline{\B^2}) ) \cup  (V \setminus  \psi_j(\overline{\B^2}) ) 
\end{array}
\right .
\end{equation}
where $q_j$ is a quasiconformal interpolation in two ring domains via Theorem \ref{thm:ann}. We note that $f_i$ and $\psi_j$ are injective on $\overline{\B^2}$ and so the inverses exist.

If we consider the semigroup generated by $g_1,\ldots, g_m$, then each $g_j$ is analytic outside either of the rings for the domain of definition of $q_j$. But $g_j$ maps these rings onto the annulus $\{ z:1<|z|<2\}$, whence when we apply any of the $g_i$, we are analytic. Therefore the orbit of any point passes at most once through an application of a map with non-trivial distortion. Consequently, the semigroup generated by $g_1,\ldots, g_m$ is a quasiregular semigroup $G$.

We finally need to show that $J(G)$ agrees with $S$. First, if $z\notin S$, then there exists a neighbourhood $U$ of $z$ and $k\in \N$ so that $U \cap X^k(\overline{\B^2}) = \emptyset$. Consequently, if $g\in G$ is any composition of at least $k+1$ elements from $\{g_1,\ldots, g_m \}$, $g(U) \cap\overline{\B^2} = \emptyset$. In particular, $G|_U$ is a normal family by Montel's Theorem. We conclude that $z\in F(G)$. 

On the other hand, if $z\in S$, then $z\in \bigcup_{j=1}^m \psi_j(\overline{\B^2})$. In particular, we can find an itinerary $(t_1,t_2,\ldots )$ for $z$ in $\{1,\ldots, m\}^{\N}$ as follows. Define $t_1\in \{1,\ldots , m\}$ such that $z\in \psi_{t_1}(\overline{\B^2})$. Note that $t_1$ may not be unique. Then define $t_2\in \{1,\ldots, m\}$ such that $\psi_{t_1}^{-1}(z) \in \psi_{t_2}(\overline{\B^2})$. We can continue inductively in this manner since $z\in S$ and $S$ is the attractor set. Then defining the sequence in $G$ given by $g_{t_1}, g_{t_2}\circ g_{t_1},\ldots$, we see that the orbit of $z$ under this sequence is bounded, since it must remain in $S$.
However, since $z\in \psi_{t_1}(\overline{\B^2})$, by construction, we can find $g_j$ from our generators of $G$ for which $\psi_{t_1}(\overline{\B^2})$ is in the escaping set $I(g_j)$. Here, we use the fact that $\psi_j(\overline{\B^2}) \cup f_i(\overline{\B^2}) \subset I(g_j)$. We conclude that there are sequences in $G$ for which the orbit of $z$ is both bounded and unbounded, respectively. Hence $z\in J(G)$.

This completes the proof of Theorem \ref{thm:1} in dimension two.

\subsection{Proof of Theorem \ref{thm:2}}

Let $E_i = \varphi_i(\B^2)$ for $i=1,\ldots, m$. By the hypotheses, $E_i$ is a quasidisk. Let $\epsilon >0$ be small.

{\bf Step 1:} Consider ring domains $R_i \subset \B^2 $ whose boundary components are smooth Jordan curves, so that $E_i$ is contained in the bounded component of $\B^2 \setminus \overline{R_i}$, the $R_i$ are pairwise disjoint and $E_j$ is not contained in the topological hull $\widetilde{R_i}$ of $R_i$ for $i\neq j$. We recall that the topological hull of a domain is the union of the domain with the bounded components of the complement. Denote by $D_i$ the bounded component of $\B^2 \setminus \overline{R_i}$.

Find $w_i \in E_i$ so that $B(w_i , \epsilon) \subset E_i$. There is a conformal map $\psi_i : D_i \to B(w_i,\epsilon)$.
Then by Theorem \ref{thm:ann}, we can find a quasiconformal map $g_i: R_i \to \widetilde{R_i} \setminus B(w_i,\epsilon)$ which is the identity on the outer boundary component of $R_i$ and equal to $\psi_i$ on the inner boundary component of $R_i$ (which equals $\partial D_i$). We then define $h_1:\B^2 \setminus \bigcup_{i=1}^m\overline{D_i} \to \B^2 \setminus \bigcup_{i=1}^m \overline{B(w_i,\epsilon)}$ as the quasiconformal map which is the identity except on $R_i$, where it equals $g_i$.

{\bf Step 2:} Let $\omega_1=1 ,\ldots, \omega_{m}$ denote the $m$'th roots of unity and set $v_i = (1-2\epsilon)\omega_i$ for $i=1,\ldots, m$. Let $F_1$ be a smooth Jordan domain in $\B^2$ compactly containing $B(w_1,\epsilon)$ from Step 1 and also $B(v_1,\epsilon)$, but not containing any of the disks $B(w_j,\epsilon)$ for $j=2,\ldots, m$. If this latter condition cannot be met, relabel either the $v_i$ or the $w_i$, or reduce $\epsilon$.
 
Let $T_1:B(w_1,\epsilon) \to B(v_1,\epsilon)$ be a translation and then set $p_1:F_1\setminus B(w_1,\epsilon) \to F_1 \setminus B(v_1,\epsilon)$ to be the quasiconformal map from Theorem \ref{thm:ann} which is the identity on $\partial F_1$ and $T_1$ on $\partial B(w_1,\epsilon)$. We obtain a quasiconformal map $q_1:\B^2 \to \B^2$ which moves $B(w_1,\epsilon)$ to $B(v_1,\epsilon)$ and leaves the other $B(w_j,\epsilon)$ alone. We repeat this procedure for $i=2,\ldots, m$ to obtain quasiconformal maps $q_1,\ldots, q_m$ so that when we form the composition $h_2 := q_m \circ \ldots \circ q_1$ we obtain a quasiconformal map which moves $B(w_i,\epsilon)$ to $B(v_i,\epsilon)$ for $i=1,\ldots, m$.

{\bf Step 3:} Letting $h_3(z) = z^m$, we see that $h_3$ maps $\B^2 \setminus \bigcup_{i=1}^m \overline{B(v_i,\epsilon)}$ onto a ring domain $R\subset \B^2$, as long as $\epsilon$ is chosen small enough.

{\bf Step 4:} Let $h_4$ be a quasiconformal map from $R$ onto the annulus $\{z : 3/2< |z| <2 \}$. By forming the composition $f=h_4\circ h_3 \circ h_2 \circ h_1$ in $\B^2 \setminus \bigcup_{i=1}^m \overline{D_i}$, we obtain a quasiregular map with image $\{ z : 3/2<|z|<2 \}$.

Since $f$ is a quasiconformal map in a neighbourhood of $\partial D_i$ in $\B^2 \setminus \bigcup_{i=1}^m \overline{D_i}$, we can apply Theorem \ref{thm:ann} to first extend $f$ to a quasiconformal map defined in $D_i \setminus \overline{E_i}$ by setting $f$ equal to $\varphi_i^{-1}$ on $\partial E_i$. We can then extend $f$ to be defined on all of $\B^2$ by setting it equal to $\varphi_i^{-1}$ on $E_i$.

{\bf Step 5:} for $|z|>2$, we set $f(z) = z^m$. The final task is then to interpolate so that $f$ is defined in $\{z:1<|z|<2\}$ with image $\{z:2<|z|<2^m\}$.

Let $\gamma$ denote the line segment $[2,2^m]$. Each endpoint of $\gamma$ has $m$ pre-images on $\{z:|z|=1\}$ and $\{z:|z| = 2\}$ respectively. Let $\sigma_i$, for $i=1,\ldots, m$, denote a hyperbolic geodesic in the hyperbolic domain $\{ z:1<|z|<2\}$ joining a pair of such pre-images, with the condition that the arcs $\sigma_i$ are pairwise disjoint. We then have a collection of simply connected domains $U_i$ in $\{z:1<|z|<2\}$, each with boundary made up of $\sigma_i, \sigma_{i+1}$ and two arcs of the boundary circles. We also have the domain $V = \{z:2<|z|<2^m\} \setminus \gamma$.

For each $i$, we construct a quasiconformal map $f_i:U_1 \to V$ as follows. We have a boundary map $f_i : \partial U_1 \to V$ defined via a bi-Lipschitz map from $\sigma_1$ and $\sigma_2$ onto $\gamma$ respectively, via the map from Step 1 on $\{z:|z|=1\}$ and via $z\mapsto z^m$ on $\{z: |z|=2\}$. Applying Riemann maps to $U_1$ and $V$, we obtain a quasisymmetric map $\partial \D \to \partial \D$ that we can extend inside $\D$ via any quasiconformal extension method, for example the Douady-Earle barycentric extension \cite{DE}. Applying the inverse of the Riemann maps, we obtain a quasiconformal map $f_i :U_1 \to V$. We repeat this argument inductively, except we are already given the map on $\sigma_i$ and have to choose a bi-Lipschitz map on $\sigma_{i+1}$. For the final step, we are given all of the boundary maps.

We then set $f$ to be $f_i$ on $\overline{U_i}$ for $i=1,\ldots,m$. The construction means the various $f_i$ agree on the overlaps of their closures of domains of definition, and so $f$ is a well-defined quasiregular map.

{\bf Conclusion:} By construction, if $z$ is in the attractor set $S$ of $X$, then $f$ acts on the orbit of $z$ always by conformal mappings. However, if $z$ does not lie on $S$, then $f$ acts on the orbit by at most finitely many of the conformal maps $\varphi_i^{-1}$, then the quasiregular map $h_4 \circ h_3 \circ h_2 \circ h_1$ from Step 4, then the quasiregular interpolation from Step 5 and finally from then on by $z\mapsto z^m$. Consequently $f$ is uniformly quasiregular. Since the escaping set is $I(f) = \R^2 \setminus S$, $S$ is a Cantor set and $\partial I(f) = J(f)$ for non-injective uqr maps by \cite[Lemma 5.2]{FN0}, we conclude that $J(f) = S$. This completes the proof.

\section{Dimension three}

\subsection{A degree two uqr map of power-type in $\R^3$}

To replace the quadratic polynomial from the dimension two case of the proof of Theorem \ref{thm:1}, we will use a degree two power-type map. This arises as a solution to the following Schr\"oder equation. Let $Z$ be a Zorich-type map which is strongly automorphic with respect to the group $G$ generated by $x\mapsto x+e_1$, $x\mapsto x+e_2$ and $x\mapsto \rho(x)$, where $e_1,e_2$ are the unit vectors in the $x_1,x_2$ direction and $\rho$ is a rotation through angle $\pi$ about the $x_3$-axis.
Let $A$ be a composition of a rotation through angle $\pi/4$ about the $x_3$-axis and a scaling by factor $\sqrt{2}$. Then since $AGA^{-1} \subset G$ there is a unique uqr solution $P$ to the Schr\"oder equation
\[ P\circ Z = Z\circ A,\]
see for example \cite{FM}. One can check that this uqr map is of power-type and has degree two. We remark that typically in the literature only power maps of degree $d^{n-1}$ have been constructed in $\R^n$, for some $d\in \N$. These arise when $A$ is a dilation, but in dimension three the inclusion of a rotation component to $A$ allows the degree to be reduced.

\subsection{Proof of Theorem \ref{thm:1} in dimension three}

Let $X$ be an IFS generated by $\varphi_1,\ldots, \varphi_m$.
The strategy of the proof is similar to that in dimension two, but some of the details are somewhat different. One aspect that is simpler in dimension three is that the only conformal mappings are M\"obius, so $\varphi_j(\B^3)$ is a ball for all $\varphi_j \in X$.

The same argument as in the dimension two case means we can replace $X$ by $X^N$ generated by $\psi_1,\ldots, \psi_m$ with the following property. There exist a constant $d>0$ and two maps $f_1,f_2 \in \{ \psi_1,\ldots, \psi_m \}$ with fixed points  $x_1,x_2$ respectively, so that for any $\psi_j$ with fixed point $w_j$, for some $i\in \{1,2\}$ we have
\[ d_h(w_j,x_i) \geq d,\]
where $d_h$ denotes the hyperbolic distance in $\B^3$. Moreover, we can assume that $N$ has been chosen large enough that 
\[ d_h( f_i(\overline{\B^3})  , \psi_j ( \overline{\B^3} ) ) \geq \frac{d}{10}. \]
Denoting by $B_h(x,r)$ the hyperbolic ball centred at $x\in \B^3$ of radius $r>0$, we can find $C>0$ so that $f_i(\overline{\B^3}) \subset B_1:=B_h(x_i,C)$, $\psi_j(\overline{\B^3}) \subset B_2:=B_h(w_j,C)$ and $d_h(B_1,B_2) \geq d/2$.

Then apply a M\"obius map $M:\B^3\to \B^3$ which sends the hyperbolic midpoint of $x_i$ and $w_j$ to $0$, and so that $M(x_i)$ and $M(w_j)$ are on the axis in the direction of the $e_1$ basis vector.

Since $M(\widetilde{B_k})$, for $k=1,2$, have the same dimensions in the Euclidean metric, that is, they are given by $B(\pm te_1,s)$ for some $s,t>0$, we can apply a winding map of the form $W:(r,\theta,z) \mapsto (r,2\theta,z)$ in cylindrical polar coordinates which maps $\B^3$ onto itself in a two-to-one manner and so that $W$ applied to $M(\widetilde{B_1})$ and $M(\widetilde{B_2})$ have the same image. Hence the image of $\B^3 \setminus (M(\widetilde{B_1}) \cup M(\widetilde{B_2}) )$ under $W$ is a ring domain. By Lemma \ref{lem:ringdomain}, we can then apply a quasiconformal map $h$ which maps this ring domain onto the annular ring domain $\{ x: 3/2<|x| < 2\}$.

We then define the map $g_i$ as follows:
\begin{itemize}
\item In $f_i(\overline{\B^3}),\psi_j(\overline{\B^3)}$, we set $g_i$ to be $f_i^{-1}$ and $\psi_j^{-1}$ respectively, with image $\overline{\B^3}$.
\item In $\overline{\B^3} \setminus ( B_1 \cup B_2)$, we set $g_i$ to be $h\circ W\circ M$. The image is $\{x : 3/2 \leq |x| \leq 2 \}$.
\item In $B_1 \setminus f_i(\overline{\B^3})$ and $B_2 \setminus \psi_j(\overline{\B^3})$, we use Theorem \ref{thm:ann} to find quasiconformal interpolations, with image $\{x : 1<|x|<3/2 \}$.
\item For $|x|>2$, we set $g_i$ to be the degree two uqr power map discussed in the previous section.
\item Finally, for $1<|x|<2$, we interpolate by using the Berstein-Edmonds extension theorem \cite[Theorem 6.2]{BE} exactly as in the proof of \cite[Theorem 1.1]{FW}.
\end{itemize}

Each $g_i$ is uqr by construction, since an orbit consists either of only conformal maps, or by finitely many conformal maps then at most three quasiregular maps and finally repeated application of a uqr map. As in the dimension two case, $G = \langle g_1,\ldots, g_m \rangle$ is a quasiregular semigroup. To see that $J(G)=S$ we apply the same reasoning as in the dimension two case. Briefly, if $x\notin S$, then there is a neighbourhood $U$ of $x$ on which any sequence from $G$ converges uniformly to infinity and hence $x\in F(G)$. On the other hand, if $x\in S$, then we can find two sequences of elements of $G$, one for which the orbit of $x$ is unbounded and one for which the orbit of $x$ remains in $S$, and hence is bounded. This completes the proof.


\begin{thebibliography}{99}

\bibitem{BE}
I. Berstein, A. L. Edmonds, On the construction of branched coverings of low-dimensional manifolds,
{\it Trans. Amer. Math. Soc.}, {\bf 247} (1979), 87--124.

\bibitem{BP} C.J. Bishop, K.M. Pilgrim, Dynamical dessins are dense, {\it Rev. Mat. Iberoam.}, {\bf 31} (2015), 1033-1040.

\bibitem{DE} A. Douady, C. J. Earle, Clifford J., Conformally natural extension of homeomorphisms of the circle, {\it Acta Math.}, {\bf 157} (1986) 23-48. 

\bibitem{Falconer} K. Falconer, {\it Fractal Geometry}, Third Edition, John Wiley and Sons, 2014.

\bibitem{F} A. Fletcher, Quasiregular semigroups with examples, to appearin {\it Disc. Cont. Dyn. Sys.}

\bibitem{FM} A. Fletcher, D, Macclure, Strongly automorphic mappings and Julia sets of uniformly quasiregular mappings, to appear in {\it J. Anal. Math.}

\bibitem{FN0} A. Fletcher, D. A. Nicks, Quasiregular dynamics on the n-spheres, {\it Erg. Th. Dyn. Sys.}, {\bf 31} (2011), 23-31.

\bibitem{FN} A. Fletcher, D. A. Nicks, Julia sets of uniformly quasiregular mappings are uniformly perfect, {\it Math. Proc. Cam. Phil. Soc.}, {\bf 151}, no.3 (2011), 541-550.

\bibitem{FW} A. Fletcher, J.-M. Wu, Julia sets and wild Cantor sets,
{\it Geom. Ded.}, {\bf 174}, no.1 (2015), 169-176.

\bibitem{HS}K. G. Hare, N. Sidirov,
Two-dimensional self-affine sets with interior points, and the set of uniqueness,
{\it Nonlinearity} {\bf 29} (2016), no. 1, 1-26. 

\bibitem{H} A. Hinkkanen, Uniformly quasiregular semigroups in two dimensions, {\it Ann. Acad. Sci. Fenn.}, {\bf 21} (1996), no.1, 205-222.

\bibitem{HM} A. Hinkkanen, G. J. Martin, The dynamics of semigroups of rational functions I,
{\it Proc. London Math. Soc.} (3), {\bf 73} (1996), 358-384.

\bibitem{Hutchinson} J. E. Hutchinson, Fractals and self-similarity, {\it Indiana Univ. Math. J.}, {\bf 30} (1981), no. 5, 713-747.

\bibitem{IM} T. Iwaniec, G. J. Martin, Quasiregular semigroups, {\it Ann. Acad. Sci. Fenn. Math.} {\bf 21} (1996), no. 2, 241-254. 

\bibitem{IMbook}
T. Iwaniec, G. Martin
{\it Geometric Function Theory and Non-linear Analysis}, 
Oxford University Press, 2001.

\bibitem{Milnor} J. Milnor, {\it Dynamics in one complex variable}, Ann. of Math. Stud., {\bf 160}, 2006.

\bibitem{Lindsey} K. A. Lindsey, Shapes of polynomial Julia sets, {\it Ergod. Th. Dynam. Sys.}, {\bf 35} (2015), 1913-1924.

\bibitem{LY} K. A. Lindsey, M. Younsi, Fekete polynomials and shapes of Julia sets, to appear in {\it Trans. Amer. Math. Soc.}

\bibitem{Rickman} S. Rickman, {\it Quasiregular mappings}, Ergebnisse der Mathematik und ihrer Grenzgebiete 26,
Springer, 1993.

\bibitem{Stankewitz} R. Stankewitz, Uniformly perfect analytic and conformal attractor sets, {\it Bull. London Math. Soc.}, {\bf 33}, no. 3 (2001), 320-330.

\bibitem{Sullivan} D. Sullivan, Hyperbolic geometry and homeomorphisms. In
{\it Geometric topology (Proc. Georgia Topology Conf., Athens, Ga., 1977)}, pages 543-555. Academic Press, New
York, 1979

\bibitem{S2} D. Sullivan, The ergodic theory at infinity of an arbitrary discrete group of holomorphic motions.  In: Riemann Surfaces and Related Topics, Proceedings of the 1978 Stony Brook Conference, {\it Ann. of Math. Stud.}, {\bf 97} (1981), 465-496.

\bibitem{Tukia} P. Tukia, One two-dimensional quasiconformal groups, {\it Ann. Acad. Sci. Fenn. Ser. A I Math.}, {\bf 5} (1980), 73-78.

\bibitem{TV} P. Tukia and J. V\"ais\"al\"a, Lipschitz and quasiconformal approximation and extension, {\it Ann. Acad. Sci. Fenn. Ser. A I Math.} {\bf 6} (1981), no. 2, 303-342.

\bibitem{XYS} F. Xie, Y. Yin, Y. Sun, Uniform perfectness of self-affine sets, {\it Proc. Amer. Math. Soc.}, {\bf 131}, no. 10 (2003), 3053-3057.

\end{thebibliography}
\end{document}